\crefname{hypothesis}{Hypothesis}{Hypotheses}
\title{Maximizing the distance to a ``far enough'' point over the intersection of hyper-disks 
%\thanks{No Thanks! \funding{This work was NOT funded! }}
}
\author{Costandin Marius\thanks{General Digits 
  (\email{costandinmarius@gmail.com}, \url{http://www.generaldigits.com}).}
\and Gavrea Bogdan \thanks{Technical University of Cluj-Napoca 
  (\email{bogdan.gavrea@math.utcluj.ro}).}
\and Costandin Beniamin \thanks{Technical University of Cluj-Napoca 
  (\email{bcostandin@yahoo.com}).}
}
\begin{document}

\maketitle

% REQUIRED
\begin{abstract}
  We present a novel feasibility criteria for the finite intersection of convex sets given by inequalities. This criteria allows us to easily assert the feasibility by analyzing the unconstrained minimum of a specific convex function, that we form with the given sets. 
 Next an algorithm is presented which extends the idea to a particular non-convex case: assert the inclusion of the finite intersection of a set of hyper-disks with equal radii in another hyper-disk with a different radius. 
% The same idea is further used  to assert, with an apriori given precision, the inclusion of a finite intersection of convex 
% sets in a ball. We give sufficient conditions for our method to decide if the inclusion happens or not. Next we apply 
%the results to approximate the solution of a strongly $NP-hard$ problem. 
\end{abstract}

% REQUIRED
\begin{keywords}
 feasibility criteria, convex optimization, non-convex optimization, quadratic programming.
\end{keywords}

% REQUIRED
\begin{AMS}
  90-08
\end{AMS}

\section{Introduction}

%\textcolor{red}{Introducerea va trebui modificata. Vom adauga referinte legate de unele metode pentru rezolvarea problemelor patratice non-convexe. Probabil cea mai cunoscuta, fiind \emph{``S-procedure''}. Odata ce avem toate sectiunile relevante structura paper-ului se va modifica automat, deci vom avea alta poveste legata de treaba asta. Deasemenea probabil mai e  nevoie aici de alte comparatii cu ce s-a mai facut in literatura. Dar pentru asta sper sa primim feedback.}

In this paper we present a novel framework for asserting the feasibility of the intersection of convex sets. Our approach is to synthesize the information in the given convex sets in a non-smooth convex function whose unconstrained minimizer can be used to assert the feasibility of the intersection. Algorithms in literature for such  a problem, namely convex feasibility,  exist,  see \cite{ref1}, \cite{ref2}, \cite{ref3} or  \cite{ref4} for example. Our contribution here is the presentation of a simple and elegant criteria. Unlike the references above, we do not focus on the convex minimization problem itself, but on the formation of the convex function to be minimized and   on the interpretation of the resulting minimizer.

 Next we extend the presented method to a particular case of mathematical programming: the assertion of the inclusion 
 of an intersection of hyper-disks in another hyper-disk. We are able to give meaningful results under some 
 requirements regarding the distance between the center of the outsidehyper-diskl and the the intersection of the hyper-
 disks with equal radii. 
 %This idea is then adapted to provide an algorithm for a strongly $NP-hard$ problem. 
% The paper has four sections. Section one, besides this introduction,  presents some definition and convergence results. 
% The second section, Main Results,  has several subsections, the first of which presents the feasibility criteria for the 
% convex feasibility problem the next presents a criteria for asserting the inclusion of an intersection of balls in a ball 
% while the last subsection treats the general case concerning the inclusion of a convex set in a ball. The next section 
% of 
% the paper presents a $NP-hard$ problem together with the application of the presented theory. Here we also have a 
% short complexity analysis for the core of the algorithm.  Finally we give a short conclusion and future work section 
% where some open points are discussed.  

% \subsection{Definitions and convergence results}
%First, we define the following well known operator:
%\begin{align}
%\frac{\partial}{\partial X} = \begin{bmatrix} \frac{\partial}{\partial x_1} &\hdots &\frac{\partial}{\partial x_n}\end{bmatrix} = \nabla^T 
%\end{align}
We will use throughout the paper the symbol $d(\cdot, \times)$ where $\cdot$ can be a point and $\times$ can be a point or a convex set of points, to designate the Euclidean distance between $\cdot$ and $\times$.  For a vector $u\in\mathbb{R}^n$, $u =\left(u_1, ...,u_n\right)^T$  and $r>0$, we denote by $B(u,r)$ the open ball centered at $u$ and of radius $r$. We also denote by $\|u\|$, $\|u\|^2 = u^T u$,  the Euclidean norm of the vector $u$.

\subsection{Convex domains of interest}
Let $x \in \mathbb{R}^{n}$, $n,m \in \mathbb{N}_+$ and let $g_k : \mathbb{R}^{n} \to \mathbb{R}$ be convex functions for $k \in \{1, \hdots, m\}$. We define the convex sets:
\[
S_k = \left \{ x \in \mathbb{R}^{n} \biggr | g_k(x) \leq 0 \right \} 
\] 
and we are interested if the set 
\begin{equation}\label{E1.3}
\mathcal{S} = \bigcap_{k=1}^m S_k
\end{equation} i
s empty or not. For this we define the following function:
\[
\widetilde{G}(x) = \sum_{k=1}^m g_k^+(x)
\] where 
\begin{equation}\label{eq:gkplus}
g_k^+(x) = \begin{cases}
g_k(x), \hspace{1cm} & g_k(x) \geq 0\\
0, & g_k(x) \leq 0 
\end{cases}
\end{equation}

\section{Main results}
In this section we present a novel feasibility criteria for the finite intersection of certain convex sets. We give a test for the inclusion of an intersection of hyper-disks  into another hyper-disk. The method of alternating projections, \cite{BauschkeBorwein1996}, is the typical choice  in the context of finding a feasible solution at the intersection of convex sets. In this paper, we give a projection-free method for solving set intersection problems. Our approach reformulates the feasibility problem as a non-smooth convex minimization problem.

\subsection{Convex feasibility}\label{sec:cvxfs}

%\textcolor{red}{Nu cred sa mai avem nevoie de sectiunea \emph{``Convex feasibility''}. Daca totusi o tinem ar trebui sa o legam cumva  de ce facem noi.}

The following result is a characterization of the set $\mathcal{S}$  in terms of a global minimum  of $\widetilde{G}(x)$.
\begin{lemma}Let 
\begin{equation}\label{eq:xstar1}
x^{\star} = \underset{x \in \mathbb{R}^n}{\mathrm{argmin}} \; \widetilde{G}(x).
\end{equation}
 Then the following are equivalent:
\begin{enumerate}
\item The set $\mathcal{S}$ is not empty, i.e $\exists\; x^0 \in \mathbb{R}^n$ such that 
\[
 g_k(x^0) \leq 0 \hspace{1cm}  \forall k  \in \{1, \hdots, m\}\nonumber 
\]
\item The point $x^\star$ defined by \cref{eq:xstar1} satisfies
\[
g_k(x^{\star}) \leq 0 \hspace{1cm} \forall k  \in \{1, \hdots, m\} \nonumber 
\]
\end{enumerate}
\end{lemma}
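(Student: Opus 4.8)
The plan is to reduce the entire equivalence to a single pointwise observation about $\widetilde{G}$, from which both implications fall out with almost no extra work. First I would record the two elementary properties of the building blocks $g_k^+$ coming from \cref{eq:gkplus}: each is nonnegative, so that $\widetilde{G}(x) \geq 0$ for every $x \in \mathbb{R}^n$; and $g_k^+(x) = 0$ precisely when $g_k(x) \leq 0$. Combining these across $k$ gives the crucial equivalence
\[
\widetilde{G}(x) = 0 \quad \Longleftrightarrow \quad g_k(x) \leq 0 \text{ for all } k \quad \Longleftrightarrow \quad x \in \mathcal{S},
\]
where the middle step uses that a sum of nonnegative terms vanishes if and only if each term vanishes.

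With this in hand, the implication $(2) \Rightarrow (1)$ is immediate: if $g_k(x^\star) \leq 0$ for every $k$, then $x^\star$ is itself a point of $\mathcal{S}$, so taking $x^0 = x^\star$ shows that $\mathcal{S}$ is nonempty.

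For $(1) \Rightarrow (2)$ I would argue through the minimal value of $\widetilde{G}$. Assuming $\mathcal{S}$ is nonempty, pick $x^0 \in \mathcal{S}$; by the equivalence above $\widetilde{G}(x^0) = 0$. Since $\widetilde{G} \geq 0$ everywhere, this forces $\min_{x} \widetilde{G}(x) = 0$ with the minimum attained at $x^0$. Because $x^\star$ is defined in \cref{eq:xstar1} as a minimizer, we get $\widetilde{G}(x^\star) = 0$, and applying the equivalence once more yields $g_k(x^\star) \leq 0$ for all $k$, which is exactly statement $(2)$.

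The argument is short because the nonnegativity of $g_k^+$ does all the work; notably, convexity of the $g_k$ is not needed for the logical equivalence itself, and matters only when one actually wants to compute $x^\star$. The one point that deserves care is the existence of the minimizer: the lemma defines $x^\star$ via an $\mathrm{argmin}$, so I would treat its existence as given, while observing that the direction $(1) \Rightarrow (2)$ in fact supplies existence for free whenever $\mathcal{S} \neq \emptyset$, since the infimum $0$ is then attained on $\mathcal{S}$. The only genuine subtlety I would double check is that $x^\star$ need not be unique; the conclusion must therefore be read as holding for any chosen minimizer, and indeed the proof uses nothing about $x^\star$ beyond the single fact $\widetilde{G}(x^\star) = 0$.
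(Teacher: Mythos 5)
Your proof is correct and follows essentially the same route as the paper: both rest on the observations that $\widetilde{G}\ge 0$ everywhere and $\widetilde{G}=0$ exactly on $\mathcal{S}$, so a minimizer must land in $\mathcal{S}$ whenever $\mathcal{S}\neq\emptyset$. The paper merely phrases the $(1)\Rightarrow(2)$ direction as a contradiction ($0=\widetilde{G}(x^0)<\widetilde{G}(x^\star)$) where you argue directly; the substance is identical.
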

\begin{proof} 
The part  $2 \Rightarrow 1$ follows immediately from $g_k(x^{\star}) \leq 0$ for all $k \in \{1, \hdots, m\}$ which implies $x^{\star} \in \mathcal{S}$ and therefore  $\mathcal{S}\neq \emptyset$. 
To prove $1 \Rightarrow 2$, let $x^0$ such that $g_k(x^0) \leq 0$ for all $k \in \{1, \hdots, m\}$ and assume that $\exists k$ such that $g_k(x^{\star}) > 0$. This implies 
\[
0 = \widetilde{G}(x^0) < \widetilde{G}(x^{\star}) 
\]  
which contradicts   the fact that $x^{\star}$ is a global minimum of $\widetilde{G}$. 
\end{proof}

\begin{remark}
The simple result above shows that the feasibility of the intersection of $m$ convex sets (sub-level sets of convex functions) can be asserted by examining the global minimum of a non-smooth convex function.
\end{remark}

%However, in the following we treat another, similar problem, then we return to the above at the end. 
\subsection{Test for the inclusion of an intersection of spheres into  another sphere}
We want to solve the following non-convex optimization problem:
\begin{eqnarray} \label{E2.2}
\max & \|x-c\|^2  & \nonumber \\
\mathrm{s.t}\;\; & \|x-c^k\|^2 \leq R^2,\;\;  & \forall k\in\{1, \hdots, m\}, \label{E2.3}
\end{eqnarray} 
where $c^k, c\in \mathbb{R}^n$ and $R\in \mathbb{R}$, $R>0$. Problem (\ref{E2.3}) is equivalent to finding a point in the intersection of the spheres centered at $c^k$ and of radius $R$ which is the furthest away from the point $c$. 
The \emph{S-procedure}, \cite{BoydGhaouiFeron94},
is a well known algorithm for programs with quadratic objective and quadratic constraints. Our approach is different and focuses on solving a non-smooth minimization problem.  

We consider  the following sets:
\begin{align} \label{E2.7}
\mathcal{B}_0 &= \overline{B}(c,r) =  \left\{ x \in \mathbb{R}^n \biggr| \| x-c\|^2  \leq r^2 \right\},  \nonumber \\
\mathcal{B}_k &= \overline{B}(c^k,R) = \left\{ x \in \mathbb{R}^n \biggr| \|x-c^k\| \leq R^2 \right\}, \nonumber \\
\mathcal{C}_1 &= \bigcap_{k=1}^m \mathcal{B}_k,  \hspace{1cm} \mathcal{C}_0 = \mathcal{B}_0
\end{align}
 for some $R,r > 0$. In order to solve the problem (\ref{E2.3}), we keep $R$ fixed and  design  a test which can assert if $\mathcal{C}_1 \subseteq \mathcal{C}_0$ for various values of $r$.

We start by defining the functions:
\begin{align}\label{E2.15a}
f_k(x) &=  \|x-c^k\|^2 - R^2  \nonumber \\
f(x) &=  \|x-c\|^2- r^2
\end{align} 
and the function $G_k(x)$, given by 
\[
G_k(x) = f_k(x) - f^{-}(x) + \sum_{i = 1, i\neq k}^m f_i^{+}(x)
\]
for $k\in \{1,...,m\}$. Here $f_i^+(x)$ are defined according to (\ref{eq:gkplus}), while 
$f^-(x)$ doesn't stand for the classical definition of the negative part of $f(x)$,  but rather 
$f^-(x) :=\min\{f(x),0\}\le 0$. 
\begin{remark}
It can be seen that $G_k$ is a convex function. First the \emph{``sum''}--term $\sum_{i = 1, i\neq k}^m f_i^{+}(x)$ is convex, since each term in the sum is convex. On the other hand, the remaining term of $G_k(x)$, namely 
$f_k(x)- f^{-}(x)$, can be written as
\[
f_k(x)- f^{-}(x)= f_k(x) - f(x) + f(x) - f^{-}(x) = f_k(x) - f(x) + f^{+}(x)
\]
which is convex since it is  the sum of the convex function $f^+(x) $ and the affine function $f_k(x) - f(x)$. 
\end{remark} 

We take $G(x)$ to be the maximum of $G_k(x)$, when $k$ ranges from $1$ to $m$. That is, 
\[
G(x) = \max \hspace{0.1cm} \left\{ G_k(x) \biggr | k \in \{ 1, \hdots, m\} \right\} =\underset{k=\overline{1,m}}{ \max} G_k(x)
\]
\begin{remark}
We note that, since $G:\mathbb{R}^n \to \mathbb{R}$ is defined  as the pointwise maximum of the convex functions $G_k:\mathbb{R}^n\to \mathbb{R}$, it follows that $G$ is convex. 
%We also note that $G$ is non-smooth.  
\end{remark}

Finally we define $x^\star$ tobe the global minimizer of $G(x)$, i.e., 
\begin{equation}\label{eq:xstar}
x^{\star} = \mathop{\text{argmin}}_{x \in \mathbb{R}^n} \hspace{0.2cm} G(x) 
\end{equation} 
Before giving our main result,we present a few simple but usefull lemmas.
\begin{lemma}\label{L2.5}
Let $a, b \in \mathbb{R}^n$ and $r > 0$ such that $b \not\in B(a,r)$. Then $\forall x \in B(a,r)$ the following
\[
(x - b)^T  (a - b) > 0
\]
holds.
\end{lemma}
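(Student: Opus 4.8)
The plan is to reduce the claim to a single application of the Cauchy--Schwarz inequality after a convenient decomposition. First I would set $v = a - b$ and record the hypothesis $b \not\in B(a,r)$ as the statement $\|v\| = \|a - b\| \ge r$, where the inequality uses that the ball is open. Likewise, every $x \in B(a,r)$ satisfies the strict inequality $\|x - a\| < r$; keeping track of this strictness is the only point that requires any care, since it is precisely what upgrades the final bound from $\ge 0$ to $> 0$.

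Next I would rewrite the quantity of interest by splitting $x - b = (x - a) + (a - b) = (x-a) + v$, so that
\[
(x-b)^T(a-b) = (x-a)^T v + v^T v = (x-a)^T v + \|v\|^2.
\]
Here the term $\|v\|^2$ is the \emph{good}, manifestly positive contribution, while the cross term $(x-a)^T v$ is the only one that could be negative and hence the only one that must be controlled.

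To bound the cross term I would apply Cauchy--Schwarz in the form $(x-a)^T v \ge -\|x-a\|\,\|v\|$, which gives
\[
(x-b)^T(a-b) \ge \|v\|^2 - \|x-a\|\,\|v\| = \|v\|\bigl(\|v\| - \|x-a\|\bigr).
\]
Finally I would combine this with the two facts collected above: since $\|v\| \ge r > \|x-a\|$, both factors $\|v\|$ and $\|v\| - \|x-a\|$ are strictly positive, so their product is strictly positive and the claim $(x-b)^T(a-b) > 0$ follows at once.

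I do not expect any genuine obstacle here, as the lemma is elementary; the single subtlety is ensuring that the chain of inequalities preserves strictness, which is guaranteed by the openness of $B(a,r)$ (yielding $\|x-a\| < r$) together with $b$ lying outside it (yielding $\|v\| \ge r$), so that $\|v\| > \|x-a\|$ strictly.
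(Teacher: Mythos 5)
Your proof is correct, and it takes a mildly different route from the paper's. The paper decomposes $x-a=(x-b)+(b-a)$ and expands the square to get the law-of-cosines identity $2(x-b)^T(a-b)=\|x-b\|^2+\|b-a\|^2-\|x-a\|^2$, then concludes by discarding the nonnegative term $\|x-b\|^2$ and using $\|b-a\|^2\ge r^2>\|x-a\|^2$; no Cauchy--Schwarz is needed. You instead decompose $x-b=(x-a)+(a-b)$, expand the inner product directly, and control the cross term $(x-a)^Tv$ with Cauchy--Schwarz, arriving at the factored lower bound $\|v\|\bigl(\|v\|-\|x-a\|\bigr)>0$. Both arguments are one-line computations resting on the same two facts ($\|x-a\|<r\le\|a-b\|$), and both correctly track where strictness enters; the paper's version is marginally more economical in that it invokes only an algebraic identity plus nonnegativity of a square, while yours makes the geometric content (the angle between $x-a$ and $a-b$ cannot be obtuse enough to overwhelm $\|v\|^2$) more transparent. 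Incidentally, the paper's own final sentence contains a sign typo, asserting $(x-b)^T(a-b)<0$ when its computation in fact yields $(x-b)^T(a-b)>0$ as the lemma states; your write-up does not have this slip.
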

\begin{proof}
Using the Euclidean norm properties over $\mathbb{R}^n$, we write
\begin{eqnarray}
\|x-a\|^2 & = & \|(x-b)+(b-a)\|^2\nonumber\\
	    &  = & \|x-b\|^2+\|b-a\|^2- 2(x-b)^T(a-b).\label{eq:auxL2.5}
\end{eqnarray}
For $x\in B(a,r)$, $b\notin B(a,r)$, we have $\|x-a\|^2<r^2$ and $\|b-a\|^2\ge r^2$. Combining these  together with $\|x-b\|^2\ge 0$ in (\ref{eq:auxL2.5}), leads to $(x-b)^T(a-b) <0$ and concludes the proof. 
\end{proof}

\begin{lemma} \label{L2.6}
 Let $x \in \mathcal{C}_1$, with $\mathcal{C}_1$ defined by (\ref{E2.7}). Then for $y \in \mathbb{R}^n$ such that  $d(y,\mathcal{C}_1) > R$ one has 
\[
(x - y)^T (c^k - y) > 0, \;\; \forall k \in \{1, \hdots, m\} 
\]
\end{lemma}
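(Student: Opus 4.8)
The plan is to reproduce the algebraic identity behind \cref{L2.5}, now read with the center $c^k$ playing the role of $a$ and the far point $y$ playing the role of $b$. Concretely, I would apply the polarization identity (\ref{eq:auxL2.5}) with $a=c^k$ and $b=y$, which rearranges to
\[
2(x-y)^T(c^k-y)=\|x-y\|^2+\|y-c^k\|^2-\|x-c^k\|^2.
\]
Everything then reduces to estimating the three terms on the right-hand side.

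The first key observation is that the membership $x\in\mathcal{C}_1$ does double duty. On the one hand, $\mathcal{C}_1\subseteq\mathcal{B}_k=\overline{B}(c^k,R)$ gives the upper bound $\|x-c^k\|^2\le R^2$. On the other hand, since $x\in\mathcal{C}_1$ is an admissible competitor for the infimum defining $d(y,\mathcal{C}_1)$, we get $\|x-y\|\ge d(y,\mathcal{C}_1)>R$, hence the lower bound $\|x-y\|^2>R^2$. The remaining term $\|y-c^k\|^2$ is simply nonnegative.

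Substituting these estimates into the identity yields
\[
2(x-y)^T(c^k-y)=\|x-y\|^2-\|x-c^k\|^2+\|y-c^k\|^2> R^2-R^2+0=0,
\]
which is the claim; the argument is uniform in $k$ because both bounds hold for every $k\in\{1,\hdots,m\}$.

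I expect no serious obstacle here: the only thing to get right is that the distance hypothesis $d(y,\mathcal{C}_1)>R$ must be converted into a bound on $\|x-y\|$, the distance from $y$ to the \emph{specific} point $x$, rather than into a bound on $\|y-c^k\|$. A tempting but harder route would be to invoke \cref{L2.5} verbatim, which would require first proving $y\notin B(c^k,R)$, i.e.\ $\|y-c^k\|\ge R$; this is not obviously implied by $d(y,\mathcal{C}_1)>R$, since $y$ may fail to lie in $\mathcal{B}_j$ for some index $j\ne k$ while still lying inside $\mathcal{B}_k$. The direct computation above sidesteps this difficulty entirely, and also absorbs the closed-ball versus open-ball subtlety present in \cref{L2.5} without any extra care.
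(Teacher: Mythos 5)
Your proof is correct and is essentially the paper's argument: the paper applies \cref{L2.5} with $a:=x$, $b:=y$, $r:=R$ and the variable point equal to $c^k$ (so the ball is centered at the feasible point $x$, contains $c^k$, and excludes $y$), and unwinding that application gives exactly your polarization identity together with the bounds $\|x-c^k\|^2\le R^2$, $\|x-y\|^2>R^2$ and $\|y-c^k\|^2\ge 0$. The application of \cref{L2.5} you flagged as problematic is the one with $a:=c^k$, which indeed does not work in general, but the symmetric choice $a:=x$ does; your inlined computation is the same thing and has the small advantage of not having to worry that $d(x,c^k)\le R$ only places $c^k$ in the closed ball $\overline{B}(x,R)$ rather than the open ball required by the letter of \cref{L2.5}.
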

\begin{proof}
For $x \in \mathcal{C}_1$, one has $d(x,c^k) \leq R$  and therefore  $c^k \in B(x,R)$. From $d(y,\mathcal{C}_1) > R$, it  follows that $d(x,y) > R$, hence $y \not\in B(x,R)$. Applying \cref{L2.5}, with $a:= x$, $b:=y$, $r:=R$,  and $x:= c^k$,  one obtains the desired conclusion. 
\end{proof}

\begin{lemma} \label{L2.7}
Let $z,y,c^1,c^2 \in \mathbb{R}^n$ with $\| y - c^1\| = \| y - c^2\| $. Assume, without loss of generality, that $\|z - c^1\|^2 \geq \|z - c^2\|^2 $ then
\[
\|y + t  (z-y) - c^1\|^2 \geq \| y + t  (z-y) - c^2 \|^2, \hspace{1cm} \forall t \geq 0. 
\]
\end{lemma}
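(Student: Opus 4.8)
The plan is to study the scalar function
\[
D(t) = \|y + t(z-y) - c^1\|^2 - \|y + t(z-y) - c^2\|^2, \qquad t \ge 0,
\]
and show that it is nonnegative for every $t \ge 0$. The whole argument rests on a single structural observation: although each of the two terms is quadratic in $t$, their \emph{difference} is affine in $t$. Indeed, writing $p(t) = y + t(z-y)$ and expanding $\|p(t)-c^i\|^2 = \|p(t)\|^2 - 2\,p(t)^T c^i + \|c^i\|^2$, the purely quadratic term $\|p(t)\|^2$ cancels in the subtraction, leaving
\[
D(t) = -2\,p(t)^T(c^1 - c^2) + \|c^1\|^2 - \|c^2\|^2,
\]
which is affine in $p(t)$ and hence affine in $t$, since $p$ is itself affine in $t$.

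Next I would evaluate $D$ at the two endpoints $t = 0$ and $t = 1$. At $t = 0$ we have $p(0) = y$, so $D(0) = \|y - c^1\|^2 - \|y - c^2\|^2 = 0$, using the hypothesis $\|y - c^1\| = \|y - c^2\|$. At $t = 1$ we have $p(1) = z$, so $D(1) = \|z - c^1\|^2 - \|z - c^2\|^2 \ge 0$, which is exactly the assumption made (without loss of generality) in the statement.

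Finally, since $D$ is affine it is determined by its values at $0$ and $1$ via $D(t) = (1-t)\,D(0) + t\,D(1) = t\,D(1)$. As $D(1) \ge 0$ and $t \ge 0$, we conclude $D(t) = t\,D(1) \ge 0$ for all $t \ge 0$, which is the claimed inequality $\|p(t) - c^1\|^2 \ge \|p(t) - c^2\|^2$.

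There is no genuinely hard step here; the only thing to notice is the cancellation of the quadratic term that reduces $D$ to an affine function, after which the equal-distance hypothesis pins down $D(0) = 0$ and the ordering hypothesis gives the correct sign of the slope $D(1)$. The mild subtlety worth stating explicitly is that the conclusion requires $t \ge 0$: the representation $D(t) = t\,D(1)$ would give $D(t) \le 0$ for $t < 0$, so the restriction to the forward ray from $y$ toward $z$ is essential and should be flagged rather than glossed over.
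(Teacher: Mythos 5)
Your proof is correct and is essentially the paper's own argument: both define the difference of squared distances along the ray, observe that the quadratic term cancels so the difference is affine in $t$, and then use $D(0)=0$ and $D(1)\ge 0$ to conclude nonnegativity for $t\ge 0$. Your explicit interpolation formula $D(t)=(1-t)D(0)+tD(1)$ is a slightly more concrete way of phrasing the paper's ``non-decreasing first order polynomial'' step, but the substance is identical.
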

\begin{proof}
Let 
\[
h(t) = \|y + t  (z-y) - c^1\|^2 - \|y + t  (z-y) - c^2\|^2.
\] 
From the identity above, it  can be  seen that $h(t)$ is a polynomial of degree at most $1$ in $t$. Since
$\|y -c^1\|=\|y -c^2\|$ gives $h(0) = 0$ and $\|z -c^1\|\ge \|z -c^2\| $ gives $h(1) \ge h(0) = 0$, it follows that $h(t)$ is a non-decreasing first order polynomial in $t$ and therefore
$$
 h(t) \ge 0=h(0), \;\; \forall t\ge 0,
$$
which completes the proof. 
\end{proof}

\begin{lemma} \label{L2.8}
Let $y, c^1, \hdots, c^m \in \mathbb{R}^n$ and $v \in \mathbb{R}^n$ such that  $\|v\| = 1$. Let $p\in \{1,...,m-1\}$ be such that 
\begin{equation}\label{eq:2.17}
\| y - c^{i}\| = \| y - c^{j}\| > \| y - c^{l}\| \hspace{1cm}
\end{equation} 
for all $i,j \in \{1, \hdots, p\}$ and $l \in \{p+1, \hdots, m\}$. Then $\exists k_v \in \{1,...,p\}$ and $\delta_v > 0$ such that for all $i \in \{1, \hdots, m\} $ one has
\begin{equation}\label{eq:2.18}
\|y + t  v- c^{k_v}\| \geq \| y + t  v - c^i \|\hspace{1cm} \forall t \in (0,\delta_v),
\end{equation} 
which is stating that  there is a small segment  starting at $y$ in the direction of $v$, such that for all the  points on this segment, $c^{k_v}$ remains the furthest away. For  the case $p=m$,  (\ref{eq:2.18}) holds without  any additional requirements. 
\end{lemma}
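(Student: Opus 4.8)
The plan is to reduce everything to the behavior of the squared–distance functions $\phi_i(t):=\|y+tv-c^i\|^2$ and to split the competitors into the ``tied'' group $\{1,\dots,p\}$ and the strictly–closer group $\{p+1,\dots,m\}$, treating each by a different mechanism. First I would record the algebraic fact driving the argument: because $\|v\|=1$, expanding $\phi_i(t)=\|y-c^i\|^2+2t\,(y-c^i)^Tv+t^2$ shows that the quadratic terms are common to all $i$, so each difference $\phi_i(t)-\phi_j(t)$ is affine in $t$. This is exactly what \cref{L2.7} packages: taking $z:=y+v$ so that $z-y=v$ and $t(z-y)=tv$, whenever two indices are equidistant from $y$, the one that is farther from $z$ remains farther along the entire ray $t\ge 0$.

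Next I would choose the index $k_v$. Among the tied indices $\{1,\dots,p\}$, let $k_v$ be one maximizing $\|z-c^{k_v}\|$ with $z=y+v$; such an index exists by finiteness. For every tied $i\in\{1,\dots,p\}$ we have $\|y-c^{k_v}\|=\|y-c^i\|$ and, by the choice of $k_v$, $\|z-c^{k_v}\|^2\ge\|z-c^i\|^2$, so \cref{L2.7} (with $c^1:=c^{k_v}$, $c^2:=c^i$) gives $\|y+tv-c^{k_v}\|\ge\|y+tv-c^i\|$ for \emph{all} $t\ge 0$. Thus the tied competitors are dominated uniformly in $t$, with no loss of a neighborhood.

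For a strictly–closer index $l\in\{p+1,\dots,m\}$, hypothesis \cref{eq:2.17} gives $\|y-c^{k_v}\|>\|y-c^l\|$, i.e. $\phi_{k_v}(0)>\phi_l(0)$. Since $t\mapsto\phi_{k_v}(t)-\phi_l(t)$ is continuous and strictly positive at $t=0$, there is $\delta_l>0$ with $\phi_{k_v}(t)>\phi_l(t)$ on $(0,\delta_l)$. Setting $\delta_v:=\min_{l>p}\delta_l>0$ and combining with the preceding paragraph yields $\|y+tv-c^{k_v}\|\ge\|y+tv-c^i\|$ for every $i\in\{1,\dots,m\}$ and every $t\in(0,\delta_v)$, which is \cref{eq:2.18}; passing between squared and unsquared norms is harmless as both sides are nonnegative. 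When $p=m$ the second group is empty, the \cref{L2.7} step alone gives the inequality for all $t\ge 0$, and any $\delta_v>0$ works, which explains the final sentence of the statement.

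The main obstacle I anticipate is not any single estimate but the requirement that one fixed index $k_v$ dominate \emph{all} competitors simultaneously. The resolution exploits the asymmetry between the two groups: \cref{L2.7} controls the tied group uniformly in $t$, so no $\delta$ is consumed there, while the strict gap at $t=0$ controls the remaining group by an openness argument, and intersecting the finitely many neighborhoods costs nothing. The delicate point to state carefully is therefore only the selection rule for $k_v$, since a careless choice could dominate some tied indices but fail others.
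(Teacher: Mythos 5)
Your proof is correct and follows essentially the same strategy as the paper: split the indices into the tied group and the strictly-closer group, pick $k_v$ as the maximizer of the distance at a point slightly along the ray and invoke \cref{L2.7} to dominate the tied competitors for all $t\ge 0$, then handle the strictly-closer indices via the gap at $t=0$. The only (immaterial) difference is that you obtain $\delta_v$ for the second group by a continuity/openness argument, whereas the paper derives it from an explicit triangle-inequality estimate on a ball $B(y,\delta)$ with $\delta<(\rho-\eta)/2$.
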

\begin{proof}
First, we consider  the case $p\in \{1,...,m-1\}$. We define  $\rho := \|y - c^{1}\|=\hdots = \|y - c^{p}\|$. Let  $\delta > 0$ and  $ z \in B(y,\delta)$. The triangle inequality gives 
\begin{eqnarray*}
\|z-c^k \| & \ge & \|c^k-y\| -\|z-y\|,\\
\|z - c^i \| & \le  & \|c^i-y\| + \| z- y \|.
\end{eqnarray*}
Using the above inequalities with arbitrary $k\in \{1,...,p\}$ and $i \in \{p+1,...,m\}$,  gives 
\begin{equation}\label{eq:2.19}
\begin{cases}
d(z,c^k) \geq \rho - \delta, \\
d(z, c^i) \leq \eta + \delta.
\end{cases}
\end{equation} 
Following (\ref{eq:2.19}), we will pick $\delta>0$ such that $\rho-\delta > \eta+\delta$. Since (\ref{eq:2.17}) implies 
$\rho-\eta>0$, it follows that any  $\delta \in \left( 0, \frac{\rho-\eta}{2}\right)$ will satisfy this requirement. Thus, for any 
$\delta \in \left( 0, \frac{\rho-\eta}{2}\right)$ and any $z\in B(y,\delta)$, we have
\begin{equation}\label{eq:2.20}
d(z,c^{k}) > d(z,c^i) \hspace{0.5cm} \forall k \in \{1, \hdots, p\} ,\hspace{0.1cm} \forall i \in \{p+1, \hdots, m\}. 
\end{equation}  
 Let $\delta_v = \frac{\delta}{2}$,  $z = y + \delta_v  v$  and  $k_v = \underset{k \in \{1, \hdots, p\}} {\mathop{\text{argmax}}} \| z - c^k\| $.  For the points $c^k$, $k\in \{1,...,p\}$, we apply   \cref{L2.7} to obtain 
  \begin{equation}\label{eq:aux1}
   \| y +(t \delta_v)  v- c^{k_v} \|^2 \ge  \| y +(t \delta_v)  v- c^{k} \|^2, \; \forall t\ge 0, \; \forall k \in \{1,...,p\}.
  \end{equation}
  On the other hand, for the points $c^i$, $i\in \{p+1,...,m\}$ we let  $z := y+t v$  in (\ref{eq:2.20}) which gives 
  \begin{equation}\label{eq:aux2}
  	 \| y +t  v- c^{k_v} \|^2 >  \| y +t v- c^{i} \|^2\;\; \forall i\in \{p+1,...,m\}, \; \forall t\in (0,\delta_v).
  \end{equation}
  Combining (\ref{eq:aux1}) and (\ref{eq:aux2}) leads to the desired conclusion (\ref{eq:2.18}). For the case $p=m$, 
  (\ref{eq:2.18}) follows immediatelly. 
\end{proof}

The following theorem represents our {\bf main result}. This is a localization result for $x^\star$ using the sphere intersection $\mathcal{C}_1$ and the \emph{``outside''} sphere $\mathcal{C}_0$. 
\begin{theorem}\label{T2.9}
If $d(C, \mathcal{C}_1) > R$ then 
\begin{equation}\label{eq:2.23}
\mathcal{C}_1 \setminus \mathrm{int}(\mathcal{C}_0) \neq \emptyset \iff x^{\star} \in \mathcal{C}_1 \setminus \mathrm{int}(\mathcal{C}_0)
\end{equation}
\end{theorem}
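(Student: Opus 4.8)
The plan is to rewrite $G$ in a form that makes its sign transparent on each region, and then to show that the global minimum value (which will turn out to be $\le 0$) can only be attained on $\mathcal{C}_1\setminus\mathrm{int}(\mathcal{C}_0)$. Starting from the definition of $G_k$ and using $f^-=f-f^+$ together with $f_k=f_k^++f_k^-$, where $f_k^-:=\min(f_k,0)$, I would first record the identity $G_k(x)=f_k^-(x)-f^-(x)+\sum_{i=1}^m f_i^+(x)$. Taking the maximum over $k$ (and using $\max_k\min(f_k,0)=\min(\max_k f_k,0)$) gives
\[
G(x) = \min\Big(\max_{k} f_k(x),\,0\Big) - f^-(x) + \sum_{i=1}^m f_i^+(x).
\]
From this I read off two facts: on $\mathcal{C}_1$ (all $f_k\le 0$) one has $G(x)=\max_k f_k(x)+\max(r^2-\|x-c\|^2,0)$, whereas off $\mathcal{C}_1$ (some $f_i>0$) one has $G(x)=\sum_i f_i^+(x)+\max(r^2-\|x-c\|^2,0)>0$. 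The reverse implication of \cref{eq:2.23} is then immediate, since $x^{\star}\in\mathcal{C}_1\setminus\mathrm{int}(\mathcal{C}_0)$ already makes that set nonempty; the whole content lies in the forward implication (here I read the hypothesis as $d(c,\mathcal{C}_1)>R$).

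Second, I would pin down where a minimizer lives. If $\mathcal{C}_1\setminus\mathrm{int}(\mathcal{C}_0)\ne\emptyset$, pick $x_1$ in it; then $\|x_1-c\|\ge r$ and $x_1\in\mathcal{C}_1$ give $G(x_1)=\max_k f_k(x_1)\le 0$, so $\min G\le 0$. Since $\sum_i f_i^+(x)$ grows quadratically for $\|x\|$ large while the remaining terms stay controlled, $G$ is coercive and a global minimizer $x^{\star}$ exists. Because $G>0$ strictly off $\mathcal{C}_1$, any minimizer satisfies $x^{\star}\in\mathcal{C}_1$, and it remains only to exclude $x^{\star}\in\mathcal{C}_1\cap\mathrm{int}(\mathcal{C}_0)$, i.e. $\|x^{\star}-c\|<r$.

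Third, and this is the crux, I would rule out $\|x^{\star}-c\|<r$ by splitting on whether $x^{\star}$ lies on the boundary or in the interior of $\mathcal{C}_1$. If $\max_k f_k(x^{\star})=0$ (boundary of $\mathcal{C}_1$), then $G(x^{\star})=\max(r^2-\|x^{\star}-c\|^2,0)=r^2-\|x^{\star}-c\|^2>0$, contradicting $G(x^{\star})=\min G\le 0$. If instead $\max_k f_k(x^{\star})<0$ ($x^{\star}$ interior to $\mathcal{C}_1$), I would exhibit a descent direction. Since $\|x^{\star}-c\|\ge d(c,\mathcal{C}_1)>R>0$, the vector $v:=(x^{\star}-c)/\|x^{\star}-c\|$ is well defined, and for small $t>0$ the point $x^{\star}+tv$ stays in $\mathcal{C}_1\cap\mathrm{int}(\mathcal{C}_0)$, where $G$ agrees with $\max_k f_k-f$, so its one-sided directional derivative along $v$ is
\[
G'(x^{\star};v)=2\max_{k\in A}(x^{\star}-c^k)^Tv-2(x^{\star}-c)^Tv=2\max_{k\in A}(c-c^k)^Tv,
\]
with $A:=\{k:\ f_k(x^{\star})=\max_j f_j(x^{\star})\}$. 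Applying \cref{L2.6} with $y:=c$ (legitimate precisely because $d(c,\mathcal{C}_1)>R$) yields $(x^{\star}-c)^T(c^k-c)>0$, hence $(c-c^k)^Tv<0$ for every $k$, so $G'(x^{\star};v)<0$, contradicting minimality. Therefore $\|x^{\star}-c\|\ge r$, i.e. $x^{\star}\in\mathcal{C}_1\setminus\mathrm{int}(\mathcal{C}_0)$.

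I expect the main obstacle to be exactly this last step. A naive ``push radially away from $c$'' move can fail on $\partial\mathcal{C}_1$, since it may violate several ball constraints simultaneously and switch on the $\sum_i f_i^+$ term; the resolution is to treat $\partial\mathcal{C}_1$ separately, where the objective value is outright positive, and to reserve the directional-derivative argument for the interior, where \cref{L2.6} pins down the behaviour of the active set $A$. I note that \cref{L2.7,L2.8} offer the alternative route of tracking which center remains farthest along a moving segment, which would let one run a single descent path in place of the boundary/interior case split.
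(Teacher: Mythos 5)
Your proof is correct, and while it shares the paper's central mechanism -- a descent direction $v=(x^{\star}-c)/\|x^{\star}-c\|$ whose effectiveness is certified by \cref{L2.6} applied with $y:=c$ (exactly where the hypothesis $d(c,\mathcal{C}_1)>R$ enters) -- the execution differs in three worthwhile ways. First, your closed-form rewriting $G=\min(\max_k f_k,0)-f^-+\sum_i f_i^+$ makes the sign analysis on and off $\mathcal{C}_1$ immediate and also gives coercivity, hence existence of $x^{\star}$, which the paper takes for granted. Second, where the paper invokes \cref{L2.8} to identify a single index $k_v$ with $G=G_{k_v}$ along a short segment and then differentiates that smooth representative, you compute the one-sided directional derivative of the pointwise maximum directly over the active set $A$; the two computations yield the same quantity $\max_{k}(c-c^k)^Tv<0$, so this is a stylistic simplification that bypasses \cref{L2.7,L2.8} entirely. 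Third, and most substantively, your case split on $\max_k f_k(x^{\star})=0$ versus $<0$ closes a genuine gap in the paper's endgame: the paper's descent argument only excludes $\mathrm{int}(\mathcal{C}_1\cap\mathcal{C}_0)$, and its concluding inclusion $\mathcal{C}_1\setminus\mathcal{C}_0\cup\partial(\mathcal{C}_1\cap\mathcal{C}_0)\subseteq\mathcal{C}_1\setminus\mathcal{C}_0\cup\partial\mathcal{C}_0$ silently ignores points of $\partial\mathcal{C}_1\cap\mathrm{int}(\mathcal{C}_0)$, whereas your case (a) disposes of them outright by noting that $G=r^2-\|\cdot-c\|^2>0$ there exceeds the minimum value. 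Your closing remark about why the radial push must not be attempted on $\partial\mathcal{C}_1$ is exactly the right diagnosis of that issue.
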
 
\begin{proof} Clearly the implication $x^{\star} \in \mathcal{C}_1 \setminus \mathrm{int}(\mathcal{C}_0) \Rightarrow
\mathcal{C}_1 \setminus \mathrm{int}(\mathcal{C}_0) \neq \emptyset $ is trivial. 
 We now  assume that $\mathcal{C}_1 \setminus \mathrm{int}(\mathcal{C}_0)\neq \emptyset$ and let $x\in \mathcal{C}_1$. It follows that $\|x -c^k \| >R^2$ for some $k \in \{1,...,m\}$ or equivalently $f_k(x) >0$ for some  $k \in \{1,...,m\}$.
 From the definitions of $f^-$ and $f_i^+$, we have $-f^-(x) \ge 0$ and $f_i^+(x) \ge 0$. Combining this with $f_k(x) >0$, leads to the fact that for $x\notin \mathcal{C}_1$ we have $G_k(x) >0$.  On the other hand if $x \in \mathcal{C}_1\setminus \mathrm{int}(\mathcal{C}_0)$, we have  $- f^-(x) = 0$, $f_k(x) \le 0$, $\forall k \in \{1,...,m\}$, implying $G(x) \le 0$. 
 
 From the observations above, it follows that $x^\star\in \mathcal{C}_1$. Next,  we will show that 
 $x^\star \notin \mathrm{int}(\mathcal{C}_1 \cap \mathcal{C}_0)$, leading to the desired conclusion. Let $y\in \mathrm{int}(\mathcal{C}_1 \cap \mathcal{C}_0)$. It follows that there exists $\delta_y>0$ such that $B(y, \delta_y) \subseteq \mathrm{int}(\mathcal{C}_1 \cap \mathcal{C}_0)$. We can assume without loss of generality that $\exists p\in \{1,...,m-1\}$ such that 
 $$
 \| y - c^1\| = ...= \| y - c^p\| > \| y - c^l \|, \; \forall l \in \{p+1,...,m\}. 
 $$
 This implies 
 $$
 	G(y) = G_1(y) =...= G_p(y). 
 $$
 From \cref{L2.8}, it follows that $\forall \; v\in \mathbb{R}^n$ with $\|v \| = 1$, $\exists k_v \in \{1,..., p\}$ and 
 $\delta_v>0 $ such that 
 \begin{equation}\label{eq:2.27}
 G(y + t v ) = G_{k_v}(y + t  v) \hspace{1cm} \forall t \in [0,\delta_v).
 \end{equation}
 Let $\delta:= \min\{\delta_y, \delta_v\}$, $v= \frac{y-c}{\|y-c\|}$ and $z=y+\frac{\delta}{2}$.   Clearly $z \in  \mathrm{int}(\mathcal{C}_1 \cap \mathcal{C}_0)$. Let $h(t): = G(y+tv)$, $\forall t \in [0, \delta_v)$. From (\ref{eq:2.27}), it follows that $h(t) = G_{k_v}(y+tv)$, or equivalently 
 \begin{equation}\label{eq:2.29}
 h(t) =  r^2 - \|y - c + t v\|^2 + \| y - c^{k_v} + t v\|^2 - R^2, \; \forall t \in [0, \delta_v).
 \end{equation}
 Differentiating \cref{eq:2.29} with respect to $t$ gives 
 \begin{eqnarray} 
h'(t) &= -(y - c + t v)^T \ v + (y - c^{k_v} + t  v)^T  v \nonumber \\
& = -(c^{k_v} -c)^T  \frac{y-c}{\|y - c\|}. \label{eq:2.30}
\end{eqnarray}
Since $d(c, \mathcal{C}_1)>R$, it follows from \cref{L2.6} and (\ref{eq:2.30}) that $h'(t)<0$, $\forall t\in [0, \delta_v)$ implying that $h(t)$ is strictly decreasing. Therefore $h(0) >h (\frac{\delta}{2})$, which is equivalent to 
$G(z) < G(y)$, $z = y +\frac{\delta}{2} v \in \mathrm{int}(\mathcal{C}_0 \cap \mathcal{C}_1)$. It follows that $x^\star =\underset{x\in \mathbb{R}^n}{\mathrm{argmin}\; G(x)} \notin \mathrm{int}(\mathcal{C}_1\cap \mathcal{C}_0)$. Since 
$\mathcal{C}_1$ can be partitioned as
$$
  \mathcal{C}_1 = \mathcal{C}_1\setminus \mathcal{C}_0 \cup \mathrm{int}(\mathcal{C}_1\cap \mathcal{C}_0)
  \cup \partial  (\mathcal{C}_1\cap \mathcal{C}_0) 
$$
and we showed that $x^\star\in \mathcal{C}_1$, $x^\star \notin \mathrm{int}(\mathcal{C}_1\cap \mathcal{C}_0)$, we 
have 
\begin{eqnarray*}
x^\star & \in & \mathcal{C}_1\setminus \mathcal{C}_0 \cup \partial  (\mathcal{C}_1\cap \mathcal{C}_0) \\
	  & \subseteq &    \mathcal{C}_1\setminus \mathcal{C}_0 \cup \partial \mathcal{C}_0,
\end{eqnarray*}
implying  that  $x^\star \in \mathcal{C}_1\setminus \mathrm{int}(\mathcal{C}_0)$. This concludes our proof. 

%\textcolor{red}{Pt. Marius. Eu inca nu sunt confident cu ce se intampla pe frontiera lui $\mathcal{C}_1$ in conditiile teoremei asa cum sunt expuse aici. Se  poate arata o forma mai tare $ \mathrm{int}(\mathcal{C}_1) \setminus  \mathrm{int}(\mathcal{C}_0) \neq \emptyset \Leftrightarrow x^\star \in \mathrm{int}(\mathcal{C}_1) \setminus  \mathrm{int}(\mathcal{C}_0) $. Din cauza ca daca partea stanga e nevida atunci poti sa artei ca pe interior obtii si $G(x) <0$ si atunci se poate merge cu argumentarea ta. }
\end{proof}

\subsection{Short Complexity Analysis}
 \cref{T2.9} allows one to solve (\ref{E2.3}) if $d(c,\mathcal{C}_1) > R$. Indeed, let $x^0 \in \mathcal{C}_1$ (this can be found initially by the use of  Section \ref{sec:cvxfs} assuming that $\mathcal{C}_1 \neq\emptyset$) then one can show that $\mathcal{C}_1 \subseteq B(x^0, 2 R)$. Let $\underline{r} = R$ and $\bar{r} = 2 R + \|x^0 - c\|$. It is obvious that $\mathcal{C}_1 \setminus B(c, \underline{r}) \neq \emptyset$ and $\mathcal{C}_1 \setminus B(c, \bar{r}) = \emptyset$. 

We can now search for $r^{\star} \in [\underline{r}, \bar{r}]$ such that $\mathcal{C}_1 \setminus B(c, r^{\star} - \epsilon) \neq \emptyset $ and $\mathcal{C}_1 \setminus B(c, r^{\star}+\epsilon) = \emptyset$ for some arbitrarily fixed precision $\epsilon > 0$, using \cref{T2.9} and the bisection algorithm. 

From the computation complexity point of view, each bisection step involves the application of \cref{T2.9} for some $r \in [\underline{r}, \bar{r}]$. For this, one has to solve \cref{eq:xstar} to find $x^{\star}$. Once $x^{\star}$ is found, asserting its membership to $\mathcal{C}_1 \setminus B(c,r)$ involves computing $m+1$ distances in $\mathbb{R}^n$, that is $(m+1)  n$ flops (for the square of the distances) and comparing them to some real numbers, hence another $m+1$ flops. Finally the computational complexity analysis for each step is completed by analyzing the cost of finding $x^{\star}$. This basically involves an unconstrained minimization of a continuous, non-differentiable convex function. The starting point can be considered $x^0$ and the search radius can be taken $2 R$. There are various algorithms (of sub-gradient, \cite{stan_notes} or ellipsoid type, \cite{ref1}) which are known to have polynomial deterministic worst case complexity for such a problem. Let $\Lambda$ (a polynomial in $n,m, \log(R), -\log(\epsilon)$) denote the number of flops required to solve \cref{eq:xstar}. Then solving (\ref{E2.3})  requires
\[ 
\mathcal{O} \left( \left(\Lambda + (m +1 ) \cdot n \right) \cdot \log_2\left( \frac{R + \|X_0 - C\|}{\epsilon} \right) \right),
\] 
where $\epsilon > 0$ is the precision used to find $r^{\star}$. 

\section{An Application}

Let $\mathcal{S}\subseteq \mathbb{R}^n$ be a convex set and $c \in \mathbb{R}^n$. If for some $\delta > 0$, the intersection   $\mathcal{C}_1$  of $m \in \mathbb{N}$ hyper-disks of radius $R > 0$ is non-empty and   
\begin{enumerate}
\item[-] $\mathcal{C}_1 \subseteq \mathcal{S}$, 
\item[-] for all $x \in \mathcal{S}$ one has $B(x, \delta) \cap \mathcal{C}_1 \neq \emptyset$
\end{enumerate} 
with  $d(c, \mathcal{C}_1) > R$,
 then one can maximize the distance to $c$ over $\mathcal{S}$ up to $\delta$ precision due to the following theorem.

\begin{theorem}
Let 
\begin{eqnarray*}
V_s^2 &=& 
\max_{x \in \mathbb{R}^n} \hspace{0.1cm}  \| x - c\|^2 \hspace{0.5cm} \mathrm{s.t}  \hspace{0.3cm} x \in \mathcal{S}  \\
V_c^2 &= &\max_{x \in \mathbb{R}^n} \hspace{0.1cm}  \| x - c\|^2 \hspace{0.5cm} \mathrm{s.t}  \hspace{0.3cm} x \in \mathcal{C}_1.
\end{eqnarray*} 
Then 
\[
V_c \leq V_s \leq V_c + \delta.
\]
\end{theorem}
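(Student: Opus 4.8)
The plan is to establish the two inequalities separately: the lower bound $V_c \le V_s$ by monotonicity of the maximum under set inclusion, and the upper bound $V_s \le V_c + \delta$ by the triangle inequality combined with the $\delta$-density hypothesis. Before that, I would record that both optima are finite and attained. The set $\mathcal{C}_1$ is the intersection of finitely many closed balls of radius $R$, hence closed and bounded, i.e.\ compact; so the continuous map $x \mapsto \|x-c\|^2$ attains its maximum $V_c^2$ on it. The second hypothesis says precisely that every $x \in \mathcal{S}$ satisfies $d(x,\mathcal{C}_1) < \delta$, so $\mathcal{S}$ is contained in the bounded open $\delta$-neighbourhood of the compact set $\mathcal{C}_1$ and is therefore bounded as well, which guarantees $V_s < \infty$.

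For the lower bound I would simply invoke the inclusion $\mathcal{C}_1 \subseteq \mathcal{S}$: maximising $\|x-c\|^2$ over the larger set $\mathcal{S}$ cannot yield a smaller value than over the subset $\mathcal{C}_1$, whence $V_c^2 \le V_s^2$ and, taking square roots, $V_c \le V_s$.

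For the upper bound I would take a point $x_s \in \mathcal{S}$ with $\|x_s - c\| = V_s$ (or, should $\mathcal{S}$ fail to be closed so that the maximum is only a supremum, a maximising sequence $x_n \in \mathcal{S}$ with $\|x_n - c\| \to V_s$, running the estimate along the sequence). The second hypothesis applied at $x_s$ produces a point $y \in B(x_s,\delta) \cap \mathcal{C}_1$, so $\|x_s - y\| < \delta$, while $\|y - c\| \le V_c$ since $y \in \mathcal{C}_1$. The triangle inequality then gives
\[
V_s = \|x_s - c\| \le \|x_s - y\| + \|y - c\| < \delta + V_c,
\]
which is the desired bound. Combining the two yields $V_c \le V_s \le V_c + \delta$.

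I do not expect a genuine obstacle here: the only point requiring care is the well-posedness of $V_s$ and $V_c$, namely the existence and finiteness of the maxima, which is why I would front-load the compactness and boundedness observations; once these are in place, the remainder is just the triangle inequality. I would also remark that the hypothesis $d(c,\mathcal{C}_1) > R$ plays no role in proving these inequalities themselves — it is needed only so that \cref{T2.9} can be applied to actually compute $V_c$, and thereby approximate the harder quantity $V_s$ to within the precision $\delta$.
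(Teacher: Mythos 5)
Your proposal is correct and follows essentially the same route as the paper: the inclusion $\mathcal{C}_1 \subseteq \mathcal{S}$ gives $V_c \le V_s$, and the triangle inequality through a point $y \in B(x,\delta)\cap\mathcal{C}_1$ gives $V_s \le V_c + \delta$. The only difference is that you front-load the (worthwhile but routine) compactness and attainment remarks, whereas the paper runs the estimate for an arbitrary $x \in \mathcal{S}$ and leaves the supremum implicit.
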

\begin{proof} 
Since $\mathcal{C}_1 \subseteq \mathcal{S}$ it is obvious that $V_c \leq V_s$. Next, let $x \in \mathcal{S}$ and $ y \in B(x,\delta) \cap \mathcal{C}_1$. Then
\[
\|x - c\| \leq \|x - y\| + \|y - c\| \leq \delta + \max_{y \in \mathcal{C}_1} \| y - c \| = \delta + V_c
\]
\end{proof}

\begin{remark} Please note that $V_c$ can be computed with the theory presented above. 
Finally, please note that it  is now easy to obtain $\hat{x} \in \partial \mathcal{S}$ such that 
\[
V_c \leq \| \hat{x} - c\| \leq V_c + \delta
\] 
by taking 
\[
\hat{x} = \mathop{\text{argmax}}_{x \in \mathcal{S}} \hspace{0.3cm} (x^{\star}_c - c)^T  (x - x^{\star}_c)
\] 
which is a linear program, with 
\[
x^{\star}_c = \mathop{\text{argmax}}_{x \in \mathcal{C}_1} \hspace{0.3cm} \| x - c\|^2
\]
\end{remark}
%\color{red}

\section{Conclusion and future work}
As future work, one should study the numerical stability of the presented algorithm and  test  the algorithm 
 on some benchmark problems.   

\bibliographystyle{siamplain}

\begin{thebibliography}{xx}  % you can also add the bibliography by hand

\bibitem{ref1} B. T. Polyak
\newblock Minimization Of Unsmooth Functionals
\newblock \emph{Moscow 1968}

\bibitem{ref2} B. T Polyak 
\newblock A general method for solving extremal problems.
\newblock DokE. Akad. Nauk SSSR. 174, 1, 33-36, 1967.

\bibitem{ref3} B.T. Polyak
\newblock Introduction to Optimization
\newblock Optimization Software New York

\bibitem{ref4} N. Parikh and S. Boyd
\newblock Proximal algorithms
\newblock Foundations and Trends in Optimization 1 123–231, 2013

%\bibitem{stan_subgrad}
%\newblock Notes On First-Order Methods For Minimizing Non-Smooth Functions
%\newblock \emph{http://web.stanford.edu/class/msande318/notes/notes-first-order-nonsmooth.pdf}

\bibitem{stan_notes} S. Boyd
\newblock Subgradient Methods
\newblock Notes for EE364b, Stanford University, Spring 2013–14

\bibitem{BoydGhaouiFeron94}
S. Boyd, L. El Ghaoui, E. Feron and V. Balakrishnan
\newblock  Linear Matrix Inequalities in System and Control Theory
\newblock \emph{Society for Industrial and Applied Mathematics}, 1994

\bibitem{sahni_comp}
Sahni, Sartaj
\newblock Computationally Related Problems
\newblock \emph{SIAM J Comput}, vol. 3, nr. 4, 1974

\bibitem{ellipsoid_alg}
Robert G. Bland, Donald Goldfarb and
Michael J. Todd
\newblock The Ellipsoid Method: A Survey
\newblock \emph{Cornell University, Ithaca, New York}, 1981

\bibitem{BauschkeBorwein1996}
H. Bauschke, J. M. Borwein
\newblock{On Projection Algorithms for Solving Convex Feasibility Problems}
\newblock \emph{SIAM Review}, 38(3), 1996.

%\bibitem[Wang, Chang(2008)]{swsys_ref3}
%J. Wang, D. Cheng
%\newblock Extensions of LaSalle's Invariance Principle for Switched Nonlinear Systems
%\newblock Volume 41, Issue 2, 2008, Pages 14397-14402
%
%\bibitem[ Y. M. Alsmadi et. all(2017)]{smc1}
% Yazan M. Alsmadi, Vadim Utkin, Mohammed A. Haj-ahmed, Longya Xu
%\newblock Sliding mode control of power converters: DC/DC converters
%\newblock International Journal of Control, DOI: 10.1080/00207179.2017.1306112
%
%\bibitem[ D. Patino et. all(2009)]{smc2}
% D. Patino , P. Riedinger, C. Iung (2009)
%\newblock  Practical optimal state feedback control law for continuous time switched affine systems with cyclic steady state
%\newblock  International Journal of Control, 82:7, 1357-1376, DOI:
%10.1080/00207170802563280
%
%\bibitem[Lin, Antsaklis (2009)]{swsys_ref4}
%Hai Lin, Panos J. Antsaklis
%\newblock Stability and Stabilizability of Switched Linear Systems: A Survey of Recent Results.
%\newblock \emph{IEEE TRANSACTIONS ON AUTOMATIC CONTROL}
%\newblock VOL. 54, NO. 2, FEBRUARY 2009
%
%\bibitem[M.Costandin, P. Dobra (2019)]{PolTraj}
%M. Costandin, P. Dobra
%\newblock Polynomial Trajectory Generation and Tracking for Linear Systems
%\newblock \emph{International Journal of Control}
%\newblock https://doi.org/10.1080/00207179.2019.1684565
%
%\bibitem[buck1(2008)]{buck1}
%Jens Ejury
%\newblock Buck Converter Design 
%\newblock \emph{Infineon Technologies North America (IFNA) Corp} Design Note DN 2013-01 V1.0 January 2013

%
%\bibitem[Soukhanov(1992)]{Heritage:92}
% Cesare Bocchiola, Davide Giacomini
%\newblock Application Note AN-1207
%\newblock International Rectifier

\end{thebibliography}

\end{document}